\documentclass[12pt]{amsart}
\usepackage[english]{babel}
\usepackage[utf8]{inputenc}
\usepackage{lipsum}
\usepackage{mathrsfs}
\usepackage{fullpage}
\usepackage{mathtools}
\usepackage{amsmath}
\usepackage[colorlinks]{hyperref}
\usepackage{fullpage}
\usepackage{mathtools}
\usepackage{amsmath}
\numberwithin{equation}{section}
\theoremstyle{plain}
\newtheorem{theorem}{Theorem}[section]

\newtheorem{cor}[theorem]{Corollary}
\newtheorem{definition}[theorem]{Definition}
\newtheorem{lemma}[theorem]{Lemma}

\newcommand{\D}{\mathcal D}
\sloppy

\begin{document}

\title[Rayleigh-Faber-Krahn, Lyapunov and Hartmann-Wintner inequalities]{Rayleigh-Faber-Krahn,  Lyapunov and Hartmann-Wintner inequalities for fractional elliptic problems}

\author[A. Kassymov]{Aidyn Kassymov}
\address{
   Aidyn Kassymov:
  \endgraf
  Department of Mathematics: Analysis, Logic and Discrete Mathematics
  \endgraf
  Ghent University, Belgium
  \endgraf
   and
  \endgraf
  Al-Farabi Kazakh National University
  \endgraf
  Almaty, Kazakhstan
  \endgraf
  and
  \endgraf
  Institute of Mathematics and Mathematical Modeling
  \endgraf
  Almaty, Kazakhstan
  \endgraf
  {\it E-mail address} {\rm kassymov@math.kz} and {\rm aidyn.kassymov@ugent.be}
}

\author[Michael Ruzhansky]{Michael Ruzhansky}
\address{
  Michael Ruzhansky:
  \endgraf
Department of Mathematics: Analysis,
Logic and Discrete Mathematics
  \endgraf
Ghent University, Belgium
  \endgraf
 and
  \endgraf
 School of Mathematical Sciences
 \endgraf
Queen Mary University of London
\endgraf
United Kingdom
\endgraf
  {\it E-mail address} {\rm michael.ruzhansky@ugent.be}
 }

\author[B. T. Torebek]{Berikbol T. Torebek}
\address{
  Berikbol T. Torebek:
    \endgraf
  Department of Mathematics: Analysis, Logic and Discrete Mathematics
  \endgraf
  Ghent University, Belgium
  \endgraf
   and
  \endgraf
  Institute of Mathematics and Mathematical Modeling
  \endgraf
  Almaty, Kazakhstan
  \endgraf
   and
  \endgraf
  Al-Farabi Kazakh National University
  \endgraf
  Almaty, Kazakhstan
  \endgraf
  {\it E-mail address} {\rm berikbol.torebek@ugent.be}
  }

\thanks{The authors were supported in parts by the FWO Odysseus 1 grant G.0H94.18N: Analysis and Partial Differential Equations. The second author was supported by EPSRC grant EP/R003025/1 and by the Leverhulme Grant RPG-2017-151. The third author was supported by a grant
No.AP08052046 from the Ministry of Science and Education of the Republic of Kazakhstan}

\keywords{Lyapunov inequality, Hartman-Wintner inequality, Rayleigh-Faber-Krahn  inequality, fractional order differential operator, Caputo derivative, Riemann-Liouville derivative.}
\subjclass[2010]{26D10, 45J05.}

\begin{abstract}
In this paper in the cylindrical domain we consider a fractional elliptic operator with Dirichlet conditions.
We prove, that the first eigenvalue of the fractional elliptic operator is minimised in a circular cylinder among all cylindrical domains of the same Lebesgue measure.  This inequality is called the Rayleigh-Faber-Krahn inequality.
Also, we
give Lyapunov and Hartmann-Wintner inequalities for the fractional elliptic boundary value
problem.
\end{abstract}

\maketitle
\tableofcontents
\section{Introduction}
Let $\Omega\subset \mathbb{R}^N,\,N>2,$ be an open bounded domain with smooth boundary and $(a,b),\,-\infty<a<b<+\infty$ be an interval. In cylindrical domain $D=(a,b)\times\Omega$ we define the operator
\begin{equation}\label{FrLap}
\mathcal{L}^{\alpha,s}u(x,y):\equiv D_{a+,x}^{\alpha} \mathcal{D}_{b-,x}^{\alpha}u(x,y)+(-\Delta)^{s}_{y}u(x,y),\,\,\,(x,y)\in D,\end{equation} with Dirichlet boundary conditions
\begin{equation}\label{DirCon1}u(a,y)=u(b,y)=0,\,\,\,\,y\in \Omega,\end{equation}
\begin{equation}\label{DirCon2}u(x,y)=0\,\,\,\,y\in \mathbb{R}^{N}\setminus \Omega,\end{equation}
where $1/2<\alpha\leq 1$ and $s\in(0,1).$ Here $D_{a+,x}^\alpha
u\left(x,y\right) = \partial_xI_{a+,x}^{1-\alpha} u(x,y)$ is the left Riemann-Liouville, $
\mathcal{D}_{b-,x}^\alpha
u\left(x,y\right) = I_{b-,x} ^{1-\alpha} u_x(x,y)$ is the right Caputo fractional derivatives of order $0<\alpha\leq 1$ with  the left and right Riemann–Liouville fractional integrals
$$
I_{a+,x}^\alpha u\left(x,y\right) = \frac{1}{{\Gamma \left( \alpha \right)}}\int\limits_a^x {\left(
{x - s} \right)^{\alpha  - 1} u\left( s,y \right)} ds$$ and $$I_{b-,x}^\alpha u\left( x,y \right) = \frac{1}{{\Gamma \left(\alpha \right)}}\int\limits_x^b {\left({s - x} \right)^{\alpha  - 1} u\left( s,y \right)} ds,\, x\in(a,b),
$$
respectively, and $(-\Delta)^{s}$ is the fractional Laplacian of order $s\in(0,1)$ defined by \begin{equation}
(-\Delta)^{s}_yu(x,y)=C_{N,s}\int_{\mathbb{R}^{N}} \frac{u(x,y)-u(x,\xi)}{|y-\xi|^{N+2s}}d\xi, \,\,\,y\in\mathbb{R}^{N},\end{equation}
where $C_{N,s}$ is some normalisation constant.

The main goals of this paper are to obtain the Rayleigh-Faber-Krahn and Lyapunov  inequalities for the boundary value problem \eqref{FrLap}, \eqref{DirCon1}, \eqref{DirCon2}.

It is known that the first eigenvalue of the multidimensional Dirichlet-Laplacian is minimised in a ball among all domains of the same Lebesgue measure.  This inequality is called the Rayleigh-Faber-Krahn inequality \cite{Hen}. Recently, some Rayleigh-Faber-Krahn type inequalities were obtained for the volume potentials \cite{Ruz, Ruz1, RuzSS}, and for the fractional elliptic operators \cite{BrPa, C17}.

For the second order differential equation with Dirichlet boundary condition Lyapunov \cite{Lyap} proved a necessary condition of existence of non-trivial solutions. In \cite{Hart}, Hartman and Wintner generalised the Lyapunov inequality.  Recently, the study of Lyapunov-type inequalities was extended to the multidimensional elliptic problems by some authors \cite{Nap, Kir1, Kir2, Odz}.

For the convenience of the reader, let us briefly summarise the results of this paper:
\begin{itemize}
\item {\bf Rayleigh-Faber-Krahn inequality for circular cylinder.} Suppose that $\frac{1}{2}<\alpha\leq 1$ and $s\in(0,1)$. Then the first eigenvalue of the problem \begin{equation*}
\mathcal{L}^{\alpha,s}u(x,y)=\nu u(x,y),\,\,\textrm{in}\,\,D=(a,b)\times\Omega,
\end{equation*} with boundary conditions \eqref{DirCon1}-\eqref{DirCon2} is minimised in the circular cylinder $\mathcal{C}$ among all cylindric domains of a given measure, that is
\begin{equation*}
   \nu_{1}(D)\geq\nu_{1}(\mathcal{C}),
\end{equation*}
for all $D$  with $|D| = |\mathcal{C}|$.
\item {\bf Rayleigh-Faber-Krahn inequality for polygonal cylinder.} Suppose that $\frac{1}{2}<\alpha\leq 1$ and $s\in(0,1)$. Then first eigenvalue of the problem \begin{equation*}
\mathcal{L}^{\alpha,s}u(x,y)=\nu u(x,y),\,\,\textrm{in}\,\,D=(a,b)\times\Omega,
\end{equation*} with boundary conditions \eqref{DirCon1}-\eqref{DirCon2} is minimised in the equilateral triangular (or square) cylinder $D^{\star}=(a,b)\times\Omega^{\star}$ among all triangular (or quadrilateral) cylindric domains of a given measure, that is
\begin{equation}
   \nu_{1}(D)\geq\nu_{1}(D^{\star}),
\end{equation}
for all $D$  with $|D| = |D^{\star}|$.
\item {\bf Lyapunov inequality.} Assume that $\frac{1}{2}<\alpha\leq 1,$ $s\in(0,1)$ and $q\in C([a,b])$. Then for the fractional elliptic equation
 \begin{equation*}
\mathcal{L}^{\alpha,s}u(x,y)=q(x)u(x,y),\,\,\textrm{in} \,\,D=(a,b)\times\Omega,
\end{equation*} with boundary conditions \eqref{DirCon1}-\eqref{DirCon2} we have
\begin{equation*}
\int_{a}^{b}|q(x)-\lambda_{1}(\Omega)|dx\geq\left(\sup\limits_{a<x<b}G(x,x)\right)^{-1},\end{equation*}
where $\lambda_{1}(\Omega)$ is the first eigenvalue of the fractional Dirichlet-Laplacian \eqref{spec} and $G(x,t)=K(x,t)-\frac{K(a,t)K(x,a)}{K(a,a)}$ and $K(x,t)=\frac{1}{\Gamma^{2}(\alpha)}\int_{\max\{x,t\}}^{b}(s-x)^{\alpha-1}(s-t)^{\alpha-1}ds.$
\item {\bf Hartmann-Wintner inequality.} Assume that $\frac{1}{2}<\alpha\leq 1,$ $s\in(0,1)$ and $q\in C([a,b])$. Then for the fractional elliptic equation
 \begin{equation*}
\mathcal{L}^{\alpha,s}u(x,y)=q(x)u(x,y),\,\,\textrm{in} \,\,D=(a,b)\times\Omega,
\end{equation*} with boundary conditions \eqref{DirCon1}-\eqref{DirCon2} we have
\begin{equation*}\int\limits_a^b \left(K(a,a)K(s,s)-K^{2}(a,s)\right) [q(x)-\lambda_{1}(\Omega)]^+ ds> \frac{(b-a)^{2\alpha-1}}{\Gamma^{2}(\alpha)(2\alpha-1)},
\end{equation*}
where $[q(x)-\lambda_{1}(\Omega)]^+ = \max\{q(x)-\lambda_{1}(\Omega), 0\}.$
\end{itemize}

\section{One dimensional fractional boundary value problem}
Let us consider the following problem:
\begin{equation}\label{mainproblem}
    \begin{cases}
D^{\alpha}_{a+}\D^{\alpha}_{b-}u(x)-q(x)u(x)=0,\,\,\,\,x\in(a,b),\,\,\,\alpha\in\left(\frac{1}{2},1\right],\\
u(a)=u(b)=0.
\end{cases}
\end{equation}
\begin{theorem}\label{thm1}
Assume that $\alpha\in\left(\frac{1}{2},1\right]$ and let $u=u(x)$ be the solution of \eqref{mainproblem}. Then the solution of \eqref{mainproblem} is the solution of the following integral equation
\begin{equation}\label{Int}    u(x)=\int_{a}^{b}G(x,t)q(t)u(t)dt,
\end{equation}
where $$G(x,t)=K(x,t)-\frac{K(a,t)K(x,a)}{K(a,a)}$$ and $$K(x,t)=\frac{1}{\Gamma^{2}(\alpha)}\int_{\max\{x,t\}}^{b}(s-x)^{\alpha-1}(s-t)^{\alpha-1}ds.$$
\end{theorem}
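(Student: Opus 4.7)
The plan is to treat $f(x):=q(x)u(x)$ as a known inhomogeneity and invert the composite fractional operator $D^{\alpha}_{a+}\mathcal{D}^{\alpha}_{b-}$ subject to the two Dirichlet conditions, so that \eqref{Int} emerges with $G(x,t)$ playing the role of the Green function for the one-dimensional fractional Dirichlet problem; substituting $f(t)=q(t)u(t)$ at the end recovers \eqref{Int}.

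First I would apply the left Riemann--Liouville integral $I^{\alpha}_{a+}$ to the identity $D^{\alpha}_{a+}[\mathcal{D}^{\alpha}_{b-}u] = f$. Using the standard inversion formula
$$I^{\alpha}_{a+}D^{\alpha}_{a+} v(x) = v(x) - \frac{(x-a)^{\alpha-1}}{\Gamma(\alpha)}\bigl[I^{1-\alpha}_{a+}v\bigr](a^{+})$$
with $v:=\mathcal{D}^{\alpha}_{b-}u$, the unknown boundary quantity $[I^{1-\alpha}_{a+}\mathcal{D}^{\alpha}_{b-}u](a^+)$ is encoded as a free scalar $c_1$, yielding
$$\mathcal{D}^{\alpha}_{b-}u(x) = I^{\alpha}_{a+}f(x) + c_1(x-a)^{\alpha-1}.$$
Next I would act on both sides by $I^{\alpha}_{b-}$ and exploit $\mathcal{D}^{\alpha}_{b-}u = I^{1-\alpha}_{b-}u'$ together with the semigroup identity $I^{\alpha}_{b-}I^{1-\alpha}_{b-} = I^{1}_{b-}$; using $u(b)=0$ the composition $I^{\alpha}_{b-}\mathcal{D}^{\alpha}_{b-}u(x)$ reduces (up to sign) to $u(x)$, expressing $u$ explicitly in terms of $I^{\alpha}_{b-}I^{\alpha}_{a+}f$ and the auxiliary term $c_1\, I^{\alpha}_{b-}[(\cdot-a)^{\alpha-1}]$.

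A Fubini exchange in the first composition identifies it as $\int_a^b K(x,t) f(t)\,dt$, with $K$ exactly the kernel appearing in the statement, while a direct computation of the right Riemann--Liouville integral of $(\cdot-a)^{\alpha-1}$ produces $\Gamma(\alpha)K(x,a)$. The remaining Dirichlet condition $u(a)=0$ then pins down $c_1$ uniquely through a linear equation of the form $c_1\,\Gamma(\alpha)\,K(a,a) = -\int_a^b K(a,t) f(t)\,dt$; substituting this value back produces the rank-one correction $-K(x,a)K(a,t)/K(a,a)$ and collapses the whole expression into a single integral of $f$ against $G(x,t) = K(x,t) - K(x,a)K(a,t)/K(a,a)$, as claimed.

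The main technical obstacle is the careful bookkeeping of the boundary term that arises when inverting $D^{\alpha}_{a+}$: the inverse is not literally $I^{\alpha}_{a+}$ but differs by a term proportional to $(x-a)^{\alpha-1}$, whose coefficient is a priori unknown and can only be eliminated after the second inversion has been performed and the condition $u(a)=0$ has been enforced. The hypothesis $\alpha > 1/2$ enters essentially to guarantee the finiteness of $K(a,a) = (b-a)^{2\alpha-1}/(\Gamma^{2}(\alpha)(2\alpha-1))$, so that division by $K(a,a)$ in the determination of $c_1$ is legitimate, and to justify the Fubini exchange in the composition of the two fractional integrals.
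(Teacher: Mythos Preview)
Your proposal is correct and follows essentially the same route as the paper: apply $I^{\alpha}_{a+}$ to pick up the boundary constant $c_1=[I^{1-\alpha}_{a+}\mathcal{D}^{\alpha}_{b-}u](a)$, then apply $I^{\alpha}_{b-}$ and use $u(b)=0$ to obtain $u(x)=c_1\Gamma(\alpha)K(x,a)+\int_a^b K(x,t)f(t)\,dt$ via Fubini, and finally fix $c_1$ from $u(a)=0$. Your explicit remark that $\alpha>\tfrac12$ is what makes $K(a,a)$ finite and the Fubini step legitimate is a useful addition not spelled out in the paper.
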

\begin{proof}
By acting with the Riemann-Liouville fractional integral on problem \eqref{mainproblem}, and using the property \cite[Lemma 2.5]{1} $$I_{a+}^\alpha D_{a+}^\alpha u(x)= u(x)-\frac{I^{1-\alpha}_{a+}u(a)}{\Gamma(\alpha)}(x-a)^{\alpha-1}, \, 0<\alpha\leq 1,$$ we have
\begin{equation}
    \begin{split}
        0&=I^{\alpha}_{a+}D^{\alpha}_{a+}\D^{\alpha}_{b-}u(x)-I^{\alpha}_{a+}(q(x)u(x))\\&
        =\D^{\alpha}_{b-}u(x)-\frac{I^{1-\alpha}_{a+}\D^{\alpha}_{b-}u(a)}{\Gamma(\alpha)}(x-a)^{\alpha-1}-I^{\alpha}_{a+}(q(x)u(x)).
    \end{split}
\end{equation}
By acting with $I^{\alpha}_{b-},$ with $u(b)=0,$ and applying the property \cite[Lemma 2.22]{1} $$I_{b-}^\alpha \mathcal{D}_{b-}^\alpha u(x)=u(x)- u(b), \, 0<\alpha\leq 1,$$ we obtain
\begin{equation*}
    \begin{split}
        0&=I^{\alpha}_{b-}\D^{\alpha}_{b-}u(x)-\frac{I^{1-\alpha}_{a+}\D^{\alpha}_{b-}u(a)}{\Gamma(\alpha)}I^{\alpha}_{b-}(x-a)^{\alpha-1}-I^{\alpha}_{b-}I^{\alpha}_{a+}(q(x)u(x))\\&
        =u(x)-u(b)-\frac{I^{1-\alpha}_{a+}\D^{\alpha}_{b-}u(a)}{\Gamma(\alpha)}I^{\alpha}_{b-}(x-a)^{\alpha-1}-I^{\alpha}_{b-}I^{\alpha}_{a+}(q(x)u(x))\\&
        =u(x)-\frac{I^{1-\alpha}_{a+}\D^{\alpha}_{b-}u(a)}{\Gamma^2(\alpha)}\int_{x}^{b}(t-x)^{\alpha-1}(t-a)^{\alpha-1}dt
        \\&-\frac{1}{\Gamma^{2}(\alpha)}\int_{x}^{b}(s-x)^{\alpha-1}\left(\int_{a}^{s}(s-t)^{\alpha-1}q(t)u(t)dt\right)ds\\&
        =u(x)-\frac{I^{1-\alpha}_{a+}\D^{\alpha}_{b-}u(a)}{\Gamma^2(\alpha)}\int_{x}^{b}(t-x)^{\alpha-1}(t-a)^{\alpha-1}dt
        \\&-\frac{1}{\Gamma^{2}(\alpha)}\int_{a}^{b}q(t)u(t)\left(\int_{\max\{x,t\}}^{b}(s-x)^{\alpha-1}(s-t)^{\alpha-1}ds\right)dt\\&
 =u(x)-I^{1-\alpha}_{a+}\D^{\alpha}_{b-}u(a)K(x,a)-\int_{a}^{b}K(x,t)q(t)u(t)dt.
\end{split}
\end{equation*}
By the condition $u(a)=0$, we get
\begin{equation*}
    \begin{split}
        0&=u(a)-I^{1-\alpha}_{a+}\D^{\alpha}_{b-}u(a)K(a,a)-\int_{a}^{b}K(a,t)q(t)u(t)dt\\&
        =-I^{1-\alpha}_{a+}\D^{\alpha}_{b-}u(a)K(a,a)-\int_{a}^{b}K(a,t)q(t)u(t)dt,
\end{split}
\end{equation*}
then, we have
$$I^{1-\alpha}_{a+}\D^{\alpha}_{b-}u(a)=-\frac{1}{K(a,a)}\int_{a}^{b}K(a,t)q(t)u(t)dt.$$

Finally, we have
\begin{equation}
    \begin{split}
        u(x)=\int_{a}^{b}\left[K(x,t)-\frac{K(a,t)K(x,a)}{K(a,a)}\right]q(t)u(t)dt=\int_{a}^{b}G(x,t)q(t)u(t)dt.
    \end{split}
\end{equation}
The proof is complete.
\end{proof}
Next let us prove one of the main tools to show the Lyapunov inequality.
\begin{lemma}\label{lem1}
Assume that $\alpha\in\left(\frac{1}{2},1\right)$. Then we have
\begin{equation}\label{eq1}
\sup_{a<t<x}G(x,t)=G(x,x)>0,\,\,\,\,\,a<t<x<b,
\end{equation}
where $G(x,t)$ is defined in Theorem \ref{thm1}.
\end{lemma}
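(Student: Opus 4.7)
The plan is to show that $G(x,\cdot)$ vanishes at $t=a$ and is strictly increasing on $(a,x)$; the lemma then follows, because the supremum of a continuous strictly increasing function on $(a,x)$ that vanishes at the left endpoint coincides with its right-limit, here $G(x,x)$, and this value exceeds $G(x,a)=0$. The boundary value $G(x,a)=0$ is immediate from the defining formula, since the two terms cancel: $G(x,a)=K(x,a)-\frac{K(a,a)K(x,a)}{K(a,a)}=0$; here the hypothesis $\alpha>1/2$ is what guarantees that $K(a,a)=(b-a)^{2\alpha-1}/(\Gamma^{2}(\alpha)(2\alpha-1))$ is finite and positive.

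Next I would compute $\partial_t G(x,t)$ for $a<t<x<b$. Since $t<x$, the integration interval $(x,b)$ in $K(x,t)$ is independent of $t$, and differentiation under the integral sign gives
$$\partial_t K(x,t)=\frac{1-\alpha}{\Gamma^{2}(\alpha)}\int_x^b (s-x)^{\alpha-1}(s-t)^{\alpha-2}\,ds>0$$
directly. For $K(a,t)$, by contrast, the lower limit of integration is $t$ itself and the integrand carries the singular factor $(s-t)^{\alpha-1}$, so a naive application of Leibniz breaks down; this is the main technical obstacle. I would overcome it by the substitution $u=s-t$, which rewrites
$$K(a,t)=\frac{1}{\Gamma^{2}(\alpha)}\int_0^{b-t}(u+t-a)^{\alpha-1}u^{\alpha-1}\,du,$$
pinning the singularity at the fixed endpoint $u=0$. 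The ordinary Leibniz rule now applies and expresses $\partial_t K(a,t)$ as the sum of a negative boundary term $-(b-a)^{\alpha-1}(b-t)^{\alpha-1}/\Gamma^{2}(\alpha)$ and a negative integral term (the latter having prefactor $\alpha-1<0$), so $\partial_t K(a,t)<0$.

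Combining these signs with the positivity of $K(x,a)$ and $K(a,a)$ yields
$$\partial_t G(x,t)=\partial_t K(x,t)-\frac{K(x,a)}{K(a,a)}\partial_t K(a,t)>0,\qquad a<t<x,$$
so $G(x,\cdot)$ is strictly monotone increasing on $(a,x)$. Continuity of $G$ up to $t=x^-$, which is immediate from the defining integrals, identifies $\sup_{a<t<x}G(x,t)$ with $G(x,x)$, and together with $G(x,a)=0$ this delivers the strict inequality $G(x,x)>0$, completing the argument.
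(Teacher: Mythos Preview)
Your proof is correct and follows essentially the same strategy as the paper: show $G(x,a)=0$, then establish $\partial_t G(x,t)>0$ on $(a,x)$ by computing the signs of $\partial_t K(x,t)$ and $\partial_t K(a,t)$ separately. The only difference is cosmetic---the paper obtains the formula for $\partial_t K(a,t)$ via an integration by parts in $s$ before differentiating, whereas you use the substitution $u=s-t$ to anchor the singular endpoint and then apply Leibniz; both routes produce the identical expression $-\Gamma(\alpha)^{-2}\bigl[(b-a)^{\alpha-1}(b-t)^{\alpha-1}+(1-\alpha)\int_t^b (s-a)^{\alpha-2}(s-t)^{\alpha-1}\,ds\bigr]$.
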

\begin{proof}
Firstly, we have
\begin{equation}\label{ten3}
    G(x,a)=K(x,a)-\frac{K(a,a)K(x,a)}{K(a,a)}=0.
\end{equation}
Then, we have
\begin{equation}\label{rav}
    \frac{\partial G(x,t)}{\partial t}=\frac{\partial K(x,t)}{\partial t}-\frac{ K(x,a)}{K(a,a)}\frac{\partial K(a,t)}{\partial t}.
\end{equation}
Let us calculate the first term of the right hand side of \eqref{rav}. For $a<t<x<b,$ we have
\begin{equation}\label{ten1}
    \begin{split}
        \frac{\partial K(x,t)}{\partial t}&=\frac{1}{\Gamma^{2}(\alpha)}\frac{\partial}{\partial t}\int_{\max\{x,t\}}^{b}(s-x)^{\alpha-1}(s-t)^{\alpha-1}ds\\&
        \stackrel{a<t<x<b}=\frac{1}{\Gamma^{2}(\alpha)}\frac{\partial}{\partial t}\int_{x}^{b}(s-x)^{\alpha-1}(s-t)^{\alpha-1}ds\\&
        =\frac{1-\alpha}{\Gamma^{2}(\alpha)}\int_{x}^{b}(s-x)^{\alpha-1}(s-t)^{\alpha-2}ds
        >0.
    \end{split}
\end{equation}
Then let us calculate the second term of \eqref{rav}. By integrating by parts, we get
\begin{equation}
    \begin{split}
        K(a,t)&=\frac{1}{\Gamma^{2}(\alpha)}\int_{\max\{a,t\}}^{b}(s-a)^{\alpha-1}(s-t)^{\alpha-1}ds\\&
       \stackrel{a=x<t}=\frac{1}{\Gamma^{2}(\alpha)}\int_{t}^{b}(s-a)^{\alpha-1}(s-t)^{\alpha-1}ds\\&
       =\frac{1}{\Gamma^{2}(\alpha)\alpha}(b-a)^{\alpha-1}(b-t)^{\alpha}-\frac{\alpha-1}{\Gamma^{2}(\alpha)\alpha}\int_{t}^{b}(s-a)^{\alpha-2}(s-t)^{\alpha}ds\\&
       =\frac{1}{\Gamma^{2}(\alpha)\alpha}(b-a)^{\alpha-1}(b-t)^{\alpha}+\frac{1-\alpha}{\Gamma^{2}(\alpha)\alpha}\int_{t}^{b}(s-a)^{\alpha-2}(s-t)^{\alpha}ds.
    \end{split}
\end{equation}
Hence, we have
\begin{equation}\label{ten2}
    \frac{\partial K(a,t)}{\partial t}=-\frac{1}{\Gamma^{2}(\alpha)}(b-a)^{\alpha-1}(b-t)^{\alpha-1}-\frac{1-\alpha}{\Gamma^{2}(\alpha)}\int_{t}^{b}(s-a)^{\alpha-2}(s-t)^{\alpha-1}ds<0.
\end{equation}
It easy to see that $K(x,a)>0$ for all $x\in[a,b]$. Then by combining this fact, \eqref{ten1} and \eqref{ten2}, we have
$$\frac{\partial G(x,t)}{\partial t}=\frac{\partial K(x,t)}{\partial t}-\frac{ K(x,a)}{K(a,a)}\frac{\partial K(a,t)}{\partial t}>0.$$
It means $G(x,t)$ is an increasing function in the variable $t$. By using this fact with \eqref{ten3}, we obtain
\begin{equation}
\sup_{a<t<x}G(x,t)=G(x,x)>0,\,\,\,\,\,a<t<x<b,
\end{equation} completing the proof.
\end{proof}

\section{Rayleigh-Faber-Krahn-type inequality on cylindrical domains}

Let us consider the following eigenvalue problem in cylindrical domain:
\begin{equation}\label{spec2}
\mathcal{L}^{\alpha,s}u(x,y)=\nu u(x,y),\,\,\textrm{in}\,\,D=(a,b)\times\Omega,
\end{equation} with Dirichlet boundary conditions \eqref{DirCon1}-\eqref{DirCon2},
where $\Omega$ is a bounded domain.

First, we give the following auxiliary statement for the application of further research.
\begin{lemma}\label{lm1}
Let $1/2<\alpha\leq 1.$ Then, the operator $D^{\alpha}_{a+}\D^{\alpha}_{b-}$ with Dirichlet boundary conditions:
\begin{itemize}
\item is self-adjoint and positive in $L^2([a,b]);$
\item the spectrum is discrete, positive and increasing.
\end{itemize}
\end{lemma}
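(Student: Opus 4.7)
The plan is to proceed in three stages: establish a symmetric, nonnegative quadratic form via fractional integration by parts, invert the operator using the Green's function from Theorem~\ref{thm1}, and then read off the spectral information from compactness of the inverse.

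First I would use the one-dimensional fractional integration by parts formula
\[
\int_a^b D^{\alpha}_{a+}\phi(x)\,\psi(x)\,dx=\int_a^b \phi(x)\,\mathcal{D}^{\alpha}_{b-}\psi(x)\,dx,
\]
valid for smooth $\phi,\psi$ provided either $\phi(a)=0$ or $\psi(b)=0$. Applying it to $u,v$ vanishing at both endpoints yields the key identity
\[
\langle D^{\alpha}_{a+}\mathcal{D}^{\alpha}_{b-}u,v\rangle_{L^2(a,b)}=\langle \mathcal{D}^{\alpha}_{b-}u,\mathcal{D}^{\alpha}_{b-}v\rangle_{L^2(a,b)}.
\]
Symmetry is immediate (the right-hand side is symmetric in $u,v$); taking $v=u$ gives $\langle D^{\alpha}_{a+}\mathcal{D}^{\alpha}_{b-}u,u\rangle=\|\mathcal{D}^{\alpha}_{b-}u\|_{L^2}^2\geq 0$, with equality forcing $\mathcal{D}^{\alpha}_{b-}u\equiv 0$, which combined with $u(b)=0$ forces $u\equiv 0$. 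So the operator is symmetric and strictly positive on its Dirichlet domain.

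Next I would invert. Theorem~\ref{thm1}, read with the forcing term $qu$ replaced by an arbitrary $f\in L^2(a,b)$, shows that the formula \eqref{Int} produces the solution of $D^{\alpha}_{a+}\mathcal{D}^{\alpha}_{b-}u=f$ with $u(a)=u(b)=0$, so the inverse is the integral operator
\[
Tf(x)=\int_a^b G(x,t)f(t)\,dt.
\]
Since $\alpha>1/2$, the kernel $K(x,t)$, and therefore $G(x,t)$, is continuous on $[a,b]^2$; symmetry $K(x,t)=K(t,x)$ is visible from the definition, and the correction $K(a,t)K(x,a)/K(a,a)$ is symmetric in $(x,t)$, so $G(x,t)=G(t,x)$ and $G\in L^\infty([a,b]^2)\subset L^2([a,b]^2)$.

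Finally, $T$ is a Hilbert--Schmidt, hence compact, self-adjoint operator on $L^2(a,b)$, and the first step gives $\langle Tf,f\rangle_{L^2}=\|\mathcal{D}^{\alpha}_{b-}(Tf)\|_{L^2}^2>0$ for $f\not\equiv 0$. The spectral theorem for compact positive self-adjoint operators then delivers a sequence of eigenvalues $\mu_1\geq\mu_2\geq\cdots>0$ with $\mu_n\to 0$; the eigenvalues of $D^{\alpha}_{a+}\mathcal{D}^{\alpha}_{b-}$ are the reciprocals $\lambda_n=\mu_n^{-1}$, forming a discrete, positive, increasing sequence tending to $+\infty$, and self-adjointness of the unbounded operator follows from self-adjointness of its bounded everywhere-defined inverse $T$. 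The only step where I expect to have to be careful is the functional-analytic bookkeeping: fixing a dense domain (a fractional Sobolev-type space in which $D^{\alpha}_{a+}\mathcal{D}^{\alpha}_{b-}u\in L^2$, the Dirichlet traces $u(a)=u(b)=0$ are meaningful, and $T$ maps $L^2$ back into this domain) so that the integration-by-parts identity above is genuinely applicable; everything after that is the standard compact-symmetric-kernel package.
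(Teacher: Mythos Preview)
Your approach is correct and is essentially the paper's: both invert the Dirichlet operator via the Green's function of Theorem~\ref{thm1}, observe that the kernel $G$ is symmetric and continuous on $[a,b]^2$ (hence the inverse is a compact self-adjoint integral operator), and then read off discreteness and positivity of the spectrum from the spectral theorem for compact operators. The only point of departure is how positivity is obtained: the paper infers it from the pointwise nonnegativity of $G$ established in Lemma~\ref{lem1}, whereas you derive it from the quadratic-form identity $\langle D^{\alpha}_{a+}\mathcal{D}^{\alpha}_{b-}u,u\rangle=\|\mathcal{D}^{\alpha}_{b-}u\|_{L^2}^2$ via fractional integration by parts---a slightly more self-contained route to strict positivity, but otherwise the arguments coincide.
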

\begin{proof}
It follows from Theorem \ref{thm1} that the inverse operator to $D^{\alpha}_{a+}\D^{\alpha}_{b-}$ with Dirichlet boundary conditions has the form \eqref{Int}. Then the symmetry and positivity of the kernel $G(x,t)$ implies the self-adjointness and positivity of the operator \eqref{Int} in $L^2([a,b]).$ Hence, all eigenvalues of operator $D^{\alpha}_{a+}\D^{\alpha}_{b-}$ with Dirichlet boundary conditions are real and positive. It is easy to show the complete continuity of the kernel $G(x,t)$, then the discreteness of the spectrum of the operator \eqref{Int} follows from this. This completes the proof.\end{proof}

\begin{theorem}
Let $1/2<\alpha\leq 1$ and $s\in(0,1).$ Then, the operator \eqref{FrLap}-\eqref{DirCon2} is self-adjoint and positive in $L^2(D),$ and all its eigenvalues are discrete, positive and increasing.
\end{theorem}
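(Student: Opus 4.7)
The plan is to prove all four properties simultaneously via separation of variables, exploiting the fact that $\mathcal{L}^{\alpha,s}$ is the sum of two commuting operators acting on disjoint groups of variables: the one-dimensional fractional operator $A := D^{\alpha}_{a+,x}\mathcal{D}^{\alpha}_{b-,x}$ acting only in $x\in(a,b)$, and the fractional Dirichlet-Laplacian $B := (-\Delta)^{s}_{y}$ acting only in $y\in\Omega$ with exterior Dirichlet condition \eqref{DirCon2}.

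First I would invoke Lemma \ref{lm1} to obtain an orthonormal basis $\{\phi_k\}_{k\geq 1}$ of $L^{2}([a,b])$ consisting of eigenfunctions of $A$ with eigenvalues $0 < \mu_1 < \mu_2 \leq \mu_3 \leq \cdots \to \infty$. In parallel, I would recall the well-known analogous result for the fractional Dirichlet-Laplacian on a bounded domain $\Omega$: $B$ is self-adjoint and positive on $L^{2}(\Omega)$, its inverse is a compact integral operator on $L^{2}(\Omega)$, so there exists an orthonormal basis $\{\psi_j\}_{j\geq 1}$ of $L^{2}(\Omega)$ with $B\psi_j = \lambda_j\psi_j$ and $0 < \lambda_1 < \lambda_2 \leq \cdots \to \infty$.

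Next I would form the tensor products $\Phi_{k,j}(x,y) := \phi_k(x)\psi_j(y)$. Since $\{\phi_k\}\otimes\{\psi_j\}$ is a complete orthonormal system in $L^{2}(D) = L^{2}([a,b])\otimes L^{2}(\Omega)$, and since $A$ acts only on the $x$-variable while $B$ acts only on the $y$-variable, each $\Phi_{k,j}$ lies in the natural domain of $\mathcal{L}^{\alpha,s}$ and satisfies
\begin{equation*}
\mathcal{L}^{\alpha,s}\Phi_{k,j}(x,y) = (\mu_k + \lambda_j)\Phi_{k,j}(x,y).
\end{equation*}
Consequently the eigenvalues of $\mathcal{L}^{\alpha,s}$ are exactly $\{\mu_k+\lambda_j\}_{k,j\geq 1}$, each of which is strictly positive; they form a discrete set accumulating only at $+\infty$, so after reordering they give an increasing sequence $\nu_1 < \nu_2 \leq \nu_3 \leq \cdots \to \infty$. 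Self-adjointness follows because the operator admits a complete orthonormal basis of eigenfunctions with real eigenvalues, and positivity follows from $\mu_k+\lambda_j>0$.

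The main obstacle I expect is the rigorous justification of the tensor-product spectral decomposition: I need the inverse of $\mathcal{L}^{\alpha,s}$ to be the tensor product $A^{-1}\otimes I + I\otimes B^{-1}$-type resolvent and, more importantly, to make sense of the sum of unbounded self-adjoint operators acting on disjoint variables. The cleanest route is to define $\mathcal{L}^{\alpha,s}$ via its quadratic form $\mathcal{E}(u,u) = \langle Au,u\rangle_{L^2} + \langle Bu,u\rangle_{L^2}$ on the form domain, which is the intersection of the form domains of $A$ (lifted to $D$) and $B$ (lifted to $D$); this form is closed, densely defined, symmetric and positive, so it determines a unique self-adjoint positive operator whose eigenfunctions are the $\Phi_{k,j}$. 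Once this is in place, the four claimed properties are immediate consequences.
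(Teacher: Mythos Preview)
Your proposal is correct and follows essentially the same approach as the paper: the paper's own proof is a one-sentence sketch stating that separation of variables reduces the problem to the two self-adjoint operators $D^{\alpha}_{a+}\mathcal{D}^{\alpha}_{b-}$ with Dirichlet conditions (handled by Lemma~\ref{lm1}) and the fractional Dirichlet-Laplacian, which is exactly what you have fleshed out. Your added discussion of the tensor-product spectral decomposition and the quadratic-form route supplies details the paper omits, but the underlying strategy is identical.
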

The theorem is proved by the method of separation of variables, by reducing problem \eqref{FrLap}-\eqref{DirCon2} to two self-adjoint operators: fractional Dirichlet-Laplacian and $D^{\alpha}_{a+}\D^{\alpha}_{b-}$ with Dirichlet boundary conditions.

From \cite{BrPa}, we can choose the first eigenfunction of
   \begin{equation}\label{spec}
 \begin{cases}
(-\Delta)_{y}^{s}\varphi_{1}(y)=\lambda_{1}(\Omega)\varphi_{1}(y),\,\,\,\,y\in \Omega,\\
   \varphi_{1}(y)=0,\,\,\,\,y\in \mathbb{R}^{N}\setminus \Omega,
 \end{cases}
\end{equation}
to be positive, corresponding to the simple and positive first eigenvalue $\lambda_{1}(\Omega)>0.$ It is known that the first eigenvalue $\lambda(\Omega)$ of the fractional Dirichlet-Laplacian \eqref{spec} is minimised in a ball $B$ among all domains of the same Lebesgue measure (see \cite[Th 3.5]{BrPa}), i.e.
\begin{equation}\label{FEi}
\lambda_1(\Omega)\geq \lambda_1(B),\,\,|\Omega|=|B|.
\end{equation}

Let us denote by $|\cdot|$ the Lebesgue measure, and let us introduce the Rayleigh quontient for the problem \eqref{spec2}, \eqref{DirCon1}-\eqref{DirCon2} in the following form:
\begin{equation}
    \nu_{1}(D)=\inf_{u\neq0}\frac{\langle \mathcal{L}^{\alpha,s}u,u\rangle}{\|u\|^{2}_{L^{2}(D)}},
\end{equation}
where $\langle \cdot,\cdot\rangle$ is an inner product in $L^{2}(D).$
\subsection{Circular cylinder case}
In this subsection we give estimate of the  first eigenvalue of \eqref{spec2} in the circular cylinder.

\begin{theorem}\label{thm12}
Suppose that $\frac{1}{2}<\alpha\leq 1$ and $s\in(0,1)$. Then first eigenvalue of \eqref{spec2} is minimised in the circular cylinder $\mathcal{C}$ among all cylindric domains of a given measure, that is
\begin{equation}
   \nu_{1}(D)\geq\nu_{1}(\mathcal{C}),
\end{equation}
for all $D$  with $|D| = |\mathcal{C}|$.
\end{theorem}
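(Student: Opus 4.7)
The plan is to exploit the product structure of $\mathcal{L}^{\alpha,s} = D_{a+,x}^{\alpha}\mathcal{D}_{b-,x}^{\alpha} + (-\Delta)^{s}_{y}$: the two summands act on different variables and therefore commute, so the spectral problem on the cylinder decouples via separation of variables into the two building-block problems that have already been analyzed in the paper.

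First I would recall from Lemma \ref{lm1} that $D_{a+,x}^{\alpha}\mathcal{D}_{b-,x}^{\alpha}$ with Dirichlet conditions on $[a,b]$ is self-adjoint, positive and has a discrete increasing spectrum $\{\mu_k\}_{k\geq 1}$ with $L^2(a,b)$-orthonormal eigenbasis $\{\phi_k\}$. From \eqref{spec} we have the corresponding spectrum $\{\lambda_j(\Omega)\}$ of the fractional Dirichlet-Laplacian on $\Omega$ with $L^2(\Omega)$-orthonormal eigenbasis $\{\varphi_j\}$. The products $\{\phi_k(x)\varphi_j(y)\}$ then form an orthonormal basis of $L^2(D)$, and each such product is an eigenfunction of $\mathcal{L}^{\alpha,s}$ with eigenvalue $\mu_k + \lambda_j(\Omega)$. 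Consequently
\begin{equation*}
\nu_1(D) = \mu_1 + \lambda_1(\Omega).
\end{equation*}
This can equivalently be established directly from the Rayleigh quotient by plugging in the trial function $\phi_1(x)\varphi_1(y)$ for the upper bound, and by expanding an arbitrary $u\in L^2(D)$ in the product basis for the lower bound.

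Next I would use the geometric observation $|D| = (b-a)\,|\Omega|$ and $|\mathcal{C}| = (b-a)\,|B|$, so that the constraint $|D| = |\mathcal{C}|$ reduces to $|\Omega| = |B|$. At this point the fractional Faber-Krahn inequality \eqref{FEi} for the Dirichlet-Laplacian yields $\lambda_1(\Omega) \geq \lambda_1(B)$. Since the one-dimensional part $\mu_1$ depends only on $(a,b)$ and $\alpha$ (not on the cross-section), we conclude
\begin{equation*}
\nu_1(D) = \mu_1 + \lambda_1(\Omega) \geq \mu_1 + \lambda_1(B) = \nu_1(\mathcal{C}),
\end{equation*}
which is the desired inequality.

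The only delicate point is the rigorous justification of the separation of variables at the level of the Rayleigh quotient, namely that the infimum over all admissible $u$ is achieved on (or at least bounded below by) product functions. This follows from the commutation of the two operators, which gives a joint spectral decomposition; no regularity obstruction arises because the eigenfunctions in each variable already lie in the natural form domains. Once this is in place, all nonsymmetric information about $\Omega$ is concentrated in the single factor $\lambda_1(\Omega)$, so the result reduces cleanly to the already-known symmetrization result for the fractional Dirichlet-Laplacian.
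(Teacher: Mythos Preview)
Your proposal is correct and follows essentially the same strategy as the paper: separate variables on the cylinder, reduce to the additive structure $\nu_1(D)=\mu_1+\lambda_1(\Omega)$, and then invoke the fractional Faber--Krahn inequality \eqref{FEi} for the cross-section. The paper carries this out at the level of the Rayleigh quotient with an explicit symmetric-decreasing rearrangement $\varphi_1^\ast$, whereas you state the spectral decomposition directly; the content is the same, and your formulation is in fact a bit cleaner.
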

\begin{proof}
Recall that $D=(a,b) \times \Omega$ is a bounded measurable set in $\mathbb {R}^{N+1}$. Its symmetric
rearrangement $\mathcal{C}=(a,b)\times B$ is the circular cylinder  with the measure equal to the measure of $D$, i.e. $|D| = |\mathcal{C}|$. Here $B\subset \mathbb{R}^N$ is an open ball. Let $u$ be a nonnegative measurable function in $D$, such that all its
positive level sets have finite measure. With the definition of the symmetric-decreasing
rearrangement of $u$ we can use the layer-cake decomposition \cite{LL}, which expresses a
nonnegative function $u$ in terms of its level sets as
\begin{equation}
u(x,y)=\int^{\infty}_{0}\chi_{\{u(x,y)>z\}}dz,\,\,\,\forall y \in \Omega,
\end{equation}
where $\chi$ is the characteristic function of the domain. The function
\begin{equation}
u^{*}(x,y)=\int^{\infty}_{0}\chi_{\{u(x,y)>z\}^{*}}dz,\,\,\,\forall y \in \Omega,
\end{equation}
is called the (radially) symmetric-decreasing rearrangement of a nonnegative measurable
function $u$.

If a domain $D$ is the cylindrical domain, we can use Fourier's method, hence we have $u(x,y)=X(x)\varphi(y)$ and $u_{1}(x,y)=X_{1}(x)\varphi_{1}(y)$ is the first eigenfunction of the operator \eqref{FrLap}- \eqref{DirCon2}. Then we have,
\begin{equation}\label{raz}
\varphi_{1}(y)D_{a+,x}^{\alpha} \mathcal{D}_{b-,x}^{\alpha}X_{1}(x)+X_{1}(x)(-\Delta)^{s}_{y}\varphi_{1}(y)=\nu_{1}X_{1}(x)\varphi_{1}(y).\end{equation}
Let us denote
$$((-\Delta)_{y}^{s}g,g)_{\Omega}=\left(\int_{\Omega}\int_{\Omega}\frac{|g(y)-g(t)|^{2}}{|y-t|^{N+2s}}dtdy\right)^{\frac{1}{2}}.$$
By the variational principle for the self-adjoint  positive operator $\mathcal{L}^{\alpha,s}$, we get
\begin{align*}\nu_{1}(D)&=\frac{\int_{a}^{b}X_{1}(x)D_{a+,x}^{\alpha} \mathcal{D}_{b-,x}^{\alpha}X_{1}(x)dx\int_{\Omega}\varphi_{1}^{2}(y)dy+(\int_{a}^{b}X^{2}_{1}(x)dx)((-\Delta)_{y}^{s}\varphi_{1},\varphi_{1})^{2}_{\Omega} }{\int_{a}^{b}X^{2}_{1}(x)dx\int_{\Omega}\varphi^{2}_{1}(y)dy}\\& =\frac{\int_{a}^{b}X_{1}(x)D_{a+,x}^{\alpha} \mathcal{D}_{b-,x}^{\alpha}X_{1}(x)dx\int_{\Omega}\varphi_{1}^{2}(y)dy+\lambda_{1}(\Omega)\int_{a}^{b}X^{2}_{1}(x)dx\int_{\Omega}\varphi^{2}_{1}(y)dy }{\int_{a}^{b}X^{2}_{1}(x)dx\int_{\Omega}\varphi^{2}_{1}(y)dy}\\&=\frac{\int_{a}^{b}X_{1}(x)D_{a+,x}^{\alpha} \mathcal{D}_{b-,x}^{\alpha}X_{1}(x)dx\int_{\Omega}\varphi_{1}^{2}(y)dy+\lambda_{1}(\Omega)\int_{a}^{b}X^{2}_{1}(x)dx\int_{\Omega}\varphi^{2}_{1}(y)dy }{\int_{a}^{b}X^{2}_{1}(x)dx\int_{\Omega}\varphi^{2}_{1}(y)dy},\end{align*}
where $\lambda_{1}(\Omega)$ is the first eigenvalue of the fractional Dirichlet-Laplacian \eqref{spec}.

For each non-negative function $v\in L^{2}(\Omega),$  we obtain
\begin{equation}\label{rav1}
\int_{\Omega}|v(y)|^{2}dy=\int_{B}|v^{*}(y)|^{2}dy.
\end{equation}
By using Theorem A.1 in \cite{FS} and \eqref{rav1}, we establish
\begin{align*}\nu_{1}(D)&=\frac{\int_{a}^{b}X_{1}(x)D_{a+,x}^{\alpha} \mathcal{D}_{b-,x}^{\alpha}X_{1}(x)dx\int_{\Omega}\varphi_{1}^{2}(y)dy+\lambda_{1}(\Omega)\int_{a}^{b}X^{2}_{1}(x)dx\int_{\Omega}\varphi^{2}_{1}(y)dy }{\int_{a}^{b}X^{2}_{1}(x)dx\int_{\Omega}\varphi^{2}_{1}(y)dy}\\&\stackrel{\eqref{FEi}}\geq
\frac{\int_{a}^{b}X_{1}(x)D_{a+,x}^{\alpha} \mathcal{D}_{b-,x}^{\alpha}X_{1}(x)dx\int_{B}(\varphi^{*}_{1}(y))^{2}dy+\lambda_{1}(B)\int_{a}^{b}X^{2}_{1}(x)dx\int_{B}(\varphi^{*}_{1}(y))^{2}dy }{\int_{a}^{b}X^{2}_{1}(x)dx\int_{B}(\varphi^{*}_{1}(y))^{2}dy}\\&=\frac{\int_{a}^{b}X_{1}(x)D_{a+,x}^{\alpha} \mathcal{D}_{b-,x}^{\alpha}X_{1}(x)dx\int_{B}(\varphi^{*}_{1}(y))^{2}dy+\int_{a}^{b}X^{2}_{1}(x)dx((-\Delta)_{y}^{s}\varphi^{*}_{1},\varphi^{*}_{1})^{2}_{B} }{\int_{a}^{b}X^{2}_{1}(x)dx\int_{B}(\varphi^{*}_{1}(y))^{2}dy}\\&
\geq \inf_{u^*_1(x,y)\neq 0}\frac{\langle \mathcal{L}^{\alpha,s}u_1^*,u_1^*\rangle}{\|u_1^*\|^{2}_{L^{2}(\mathcal{C})}}=\nu_{1}(\mathcal{C}).
\end{align*}
The proof is complete.
\end{proof}

\begin{cor}
Suppose that $\frac{1}{2}<\alpha\leq 1$ and $s\in(0,1)$. Then first characteristic number $\mu_1(D)=\frac{1}{\nu_1(D)}$ of \eqref{spec2} is maximised in the circular cylinder $\mathcal{C}$ among all cylindric domains of a given measure, that is
\begin{equation}
   \mu_{1}(D)\leq\mu_{1}(\mathcal{C}),
\end{equation}
for all $D$  with $|D| = |\mathcal{C}|$.
\end{cor}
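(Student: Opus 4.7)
The plan is to observe that this corollary is an immediate consequence of Theorem \ref{thm12} combined with the strict positivity of the first eigenvalue $\nu_1$. The whole argument is a one-line inversion, so I would keep the proof very short.

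First I would record that $\nu_1(D)>0$ for every admissible cylindric domain $D$. This is not something one has to work for: by the structure result preceding Theorem \ref{thm12} (the self-adjointness and positivity of $\mathcal{L}^{\alpha,s}$ on $L^2(D)$, obtained via separation of variables together with Lemma \ref{lm1} on the positivity of $D^{\alpha}_{a+}\D^{\alpha}_{b-}$ and the positivity of $(-\Delta)^s$ on $\Omega$), the spectrum of $\mathcal{L}^{\alpha,s}$ is discrete, positive, and increasing. In particular $0<\nu_1(\mathcal{C})\leq \nu_1(D)$ for all cylindric $D$ with $|D|=|\mathcal{C}|$, which is exactly the content of Theorem \ref{thm12}.

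Next I would simply invert the inequality. Since both quantities are strictly positive, applying $t\mapsto 1/t$ (which is decreasing on $(0,\infty)$) to
\begin{equation*}
\nu_1(D)\geq \nu_1(\mathcal{C})>0
\end{equation*}
yields
\begin{equation*}
\mu_1(D)=\frac{1}{\nu_1(D)}\leq \frac{1}{\nu_1(\mathcal{C})}=\mu_1(\mathcal{C}),
\end{equation*}
which is the desired assertion. Equality of measures $|D|=|\mathcal{C}|$ is preserved verbatim from the hypothesis of Theorem \ref{thm12}.

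There is no genuine obstacle here; the only thing one has to be careful about is to invoke the positivity $\nu_1(\mathcal{C})>0$ before inverting, so that dividing by $\nu_1(\mathcal{C})$ is legitimate and the direction of the inequality is correctly reversed. Since positivity has already been established in the theorem preceding Theorem \ref{thm12}, the corollary follows at once, and I would not need more than a couple of lines to write it out.
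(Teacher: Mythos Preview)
Your proposal is correct and matches the paper's approach: the paper states this corollary without proof, treating it as an immediate consequence of Theorem \ref{thm12} by inverting the inequality $\nu_1(D)\geq\nu_1(\mathcal{C})>0$. Your explicit invocation of the positivity of $\nu_1$ before inverting is a sensible clarification that the paper leaves implicit.
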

\subsection{Polygonal cylindric case} In this subsection we show the Rayleigh-Faber-Krahn inequality on the triangular and quadrilateral cylinders. Firstly, we recall the definition of the Steiner symmetrization (see \cite{Hen} and \cite{C17}).

Let $u(x,y)$ be a nonnegative, measurable function on $(a,b)\times\mathbb{R}^{N}$, and let $V$ be a $N-1$ dimensional plane through the origin of $\mathbb{R}^{N}$. Choose an orthogonal coordinate system in $\mathbb{R}^{N}$ such that the $y^{1}$-axis is perpendicular to $V\ni z=(y^{2},\ldots,y^{N})$.
\begin{definition}
A nonnegative, measurable function $u^{\star}(x,y)$ on $(a,b)\times\mathbb{R}^{n}$ is called the Steiner symmetrization with respect to $V$ of the function $u(x,y)$, if $u^{\star}(x,y^{1},y^{2},\ldots,y^{N})$ is a symmetric decreasing rearrangement with respect to $y^{1}$ of $u(x,y^{1},y^{2},\ldots,y^{N})$ for each fixed $y^{2},\ldots,y^{N}$.
\end{definition}

The Steiner symmetrization (with respect to the $y^{1}$-axis) $D^{\star}=(a,b)\times \Omega^{\star}$
of a measurable set $D=(a,b)\times\Omega$ is defined in the following way:
if we write $y=(y^{1},z)$ with $z\in \mathbb{R}^{N-1}$, and let
$\Omega_{z}=\{y^{1}: (y^{1},z)\in\Omega\}$, then
$$
D^{\star}:=\{(x,y^{1},z)\in (a,b)\times\mathbb{R}\times\mathbb{R}^{N-1}: y^{1}\in \Omega^{*}_{z}\},
$$
where $\Omega^{*}_{z}$ is the symmetric rearrangement of $\Omega_{z}$
(see the proof of Theorem \ref{thm12}).
Then, we have the Rayleigh-Faber-Krahn inequality on triangle and quadrilateral cylinders.
\begin{theorem}[\cite{C17}, Theorem 1.1]
The equilateral triangle has the least first eigenvalue for the fractional Dirichlet $p$-Laplacian among all triangles of given measure. The square has the
least first eigenvalue for the fractional Dirichlet $p$-Laplacian among all quadrilaterals of given measure. Moreover, the equilateral triangle and the square are the unique
minimizers in the above problems.
\end{theorem}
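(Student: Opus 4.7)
The plan is to attack this with Steiner symmetrization combined with the fractional (nonlocal) Pólya--Szegő principle. Recall that the first Dirichlet eigenvalue of the fractional $p$-Laplacian on an open bounded set $\Omega \subset \mathbb{R}^N$ has the variational characterization
\begin{equation*}
\lambda_1(\Omega) \;=\; \inf_{u \in W^{s,p}_0(\Omega)\setminus\{0\}} \frac{\int_{\mathbb{R}^N}\int_{\mathbb{R}^N} |u(y)-u(z)|^{p}/|y-z|^{N+sp}\, dy\, dz}{\int_{\Omega} |u|^{p}\, dy}.
\end{equation*}
The decisive fact I would invoke is the inequality of Almgren--Lieb / Frank--Seiringer: if $u^{\star}$ denotes the Steiner symmetrization of a nonnegative $u \in W^{s,p}_0(\Omega)$ with respect to a hyperplane $V$, then the Gagliardo numerator above does not increase under the passage $u \mapsto u^{\star}$, whereas the denominator and the Lebesgue measure of the support are preserved. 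Applied to the first eigenfunction, this immediately yields $\lambda_1(\Omega^{\star}) \leq \lambda_1(\Omega)$ for any Steiner symmetrization.

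First I would handle the triangle case. Given an arbitrary triangle $T$, Steiner-symmetrize with respect to the perpendicular bisector of one side; the result is an isosceles triangle $T_1$ of the same area satisfying $\lambda_1(T_1) \leq \lambda_1(T)$. Iterating this construction cyclically over the three sides produces a sequence $\{T_n\}$ of isosceles triangles, and a classical compactness/normalization argument (the shape functional converges, the side lengths equalize in the limit) shows that $T_n$ converges in the Hausdorff metric, up to rigid motions, to the equilateral triangle $T^{\star}$ of the same area. Combined with stability of the fractional Dirichlet eigenvalue under this Hausdorff convergence, this gives $\lambda_1(T^{\star}) \leq \lambda_1(T)$.

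For a quadrilateral $Q$ the same scheme applies, but one alternates Steiner symmetrizations along axes perpendicular to each pair of opposite sides; the iteration produces a sequence of quadrilaterals converging to a square $Q^{\star}$ with $|Q^{\star}|=|Q|$, so that $\lambda_1(Q^{\star}) \leq \lambda_1(Q)$. Uniqueness of the minimizer in both cases I would extract from the rigidity statement accompanying the nonlocal Pólya--Szegő inequality: equality in the Gagliardo seminorm forces $u$ to coincide with a translate of $u^{\star}$, hence at any step in the symmetrization sequence the strict inequality $\lambda_1(\Omega^{\star}) < \lambda_1(\Omega)$ holds unless $\Omega$ is already symmetric with respect to the chosen axis. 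Being simultaneously symmetric with respect to the perpendicular bisectors of all sides forces an equilateral triangle (respectively a square).

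The main obstacle is twofold. The purely geometric step, namely that the sequence of iterated Steiner symmetrizations of a triangle (respectively a quadrilateral) along perpendicular bisectors of its sides converges in the Hausdorff sense to the equilateral triangle (respectively the square) of the same area, is subtle and requires a careful compactness argument and a monotone shape-invariant that forces equalization of side lengths. The second delicate point is the rigidity in the fractional Pólya--Szegő inequality: unlike in the classical local case, the nonlocality of the Gagliardo seminorm makes the equality analysis much harder, and translating it into uniqueness among polygons of given area is where most of the technical work lies.
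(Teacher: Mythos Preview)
The paper does not supply a proof of this statement at all: it is quoted verbatim as Theorem~1.1 of \cite{C17} and invoked as a black box in the proof of Theorem~\ref{thm13}. So there is no ``paper's own proof'' to compare your proposal against.

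That said, your outline is essentially the strategy of \cite{C17}: the variational characterization of $\lambda_1$, the nonincrease of the Gagliardo seminorm under Steiner symmetrization (which you correctly attribute to Frank--Seiringer, the same reference \cite{FS} used in the present paper), and the iterated-symmetrization argument converging to the regular polygon. Your identification of the two delicate points is also accurate: the Hausdorff convergence of the iterated Steiner symmetrizations to the equilateral triangle (resp.\ square) is a purely geometric lemma that requires care (in the classical local case this goes back to P\'olya--Szeg\H{o} and is reproved in \cite{Hen}), and the rigidity/uniqueness step in the nonlocal setting is genuinely the hardest part of \cite{C17}. Your sketch is a faithful high-level summary of that argument, but since the present paper simply cites the result, there is nothing further to compare.
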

Let us consider the following eigenvalue problem in triangular (or quadrilateral) cylindrical domain:
\begin{equation}\label{spec3}
\mathcal{L}^{\alpha,s}u(x,y)=\nu u(x,y),\,\,\textrm{in}\,\,D=(a,b)\times\Omega,
\end{equation} with Dirichlet boundary conditions \eqref{DirCon1}-\eqref{DirCon2},
where $\Omega$ is a triangle (or quadrilateral).
Let us give the main result of this subsection.
\begin{theorem}\label{thm13}
Suppose that $\frac{1}{2}<\alpha\leq 1$ and $s\in(0,1)$. Then first eigenvalue of the \eqref{spec3} is minimised in the equilateral triangular (or square) cylinder $D^{\star}=(a,b)\times\Omega^{\star}$ among all triangular (or quadrilateral) cylindric domains of a given measure, that is
\begin{equation}
   \nu_{1}(D)\geq\nu_{1}(D^{\star}),
\end{equation}
for all $D$  with $|D| = |D^{\star}|$.
\end{theorem}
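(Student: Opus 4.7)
The plan is to mirror the proof of Theorem~\ref{thm12}, replacing the spherically symmetric decreasing rearrangement by Steiner symmetrization in the transverse variable $y$, and invoking the cited theorem of \cite{C17} in place of the fractional Faber--Krahn inequality \eqref{FEi}.

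First, I would exploit the product structure of the operator: since $\mathcal{L}^{\alpha,s}=D^{\alpha}_{a+}\mathcal{D}^{\alpha}_{b-}+(-\Delta)^s_y$ is a sum of a one-dimensional self-adjoint positive operator in $x$ and the fractional Dirichlet-Laplacian in $y$, separation of variables yields a first eigenfunction of the form $u_1(x,y)=X_1(x)\varphi_1(y)$, where $\varphi_1$ is the first positive Dirichlet eigenfunction of $(-\Delta)^s_y$ on $\Omega$ with eigenvalue $\lambda_1(\Omega)$. Substituting this factorization into the Rayleigh quotient, exactly as in the proof of Theorem~\ref{thm12}, gives
\[
\nu_1(D)=\frac{\int_a^b X_1(x)\,D^{\alpha}_{a+}\mathcal{D}^{\alpha}_{b-}X_1(x)\,dx}{\int_a^b X_1^2(x)\,dx}+\lambda_1(\Omega).
\]

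The next step is to replace $\Omega$ by its Steiner symmetrization $\Omega^{\star}$. Two ingredients are needed: first, Steiner symmetrization preserves the $L^2$ norm of $\varphi_1$, so the denominator is unchanged when $\varphi_1$ is replaced by $\varphi_1^{\star}$; second, applying Theorem~1.11 from \cite{C17} (specialized to the case $p=2$) we obtain $\lambda_1(\Omega)\geq\lambda_1(\Omega^{\star})$, where $\Omega^{\star}$ is the equilateral triangle (respectively the square) of measure $|\Omega|$. Since the $x$-integrals in the Rayleigh quotient are untouched by a rearrangement acting only in $y$, we chain these estimates to obtain
\[
\nu_1(D)\geq\frac{\int_a^b X_1\,D^{\alpha}_{a+}\mathcal{D}^{\alpha}_{b-}X_1\,dx}{\int_a^b X_1^2\,dx}+\lambda_1(\Omega^{\star})\geq \inf_{v\neq 0}\frac{\langle \mathcal{L}^{\alpha,s}v,v\rangle}{\|v\|_{L^2(D^{\star})}^2}=\nu_1(D^{\star}),
\]
where the last step is the variational characterization on $D^{\star}$ applied to the separable trial function $X_1(x)\varphi_1^{\star}(y)$.

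The only real obstacle is ensuring that the symmetrization inequality for the fractional Dirichlet-Laplacian on polygonal bases gives exactly what is needed; this is precisely the content of Theorem~1.11 of \cite{C17} at $p=2$, and in particular it already incorporates the geometric fact that the Steiner symmetrization of a triangle (respectively quadrilateral) along an appropriate axis decreases the first eigenvalue and converges, upon iteration, to the equilateral triangle (respectively the square). The remaining manipulations are formal and entirely parallel to the circular cylinder case treated in Theorem~\ref{thm12}.
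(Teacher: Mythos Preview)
Your proposal is correct and follows essentially the same approach as the paper: the authors' proof consists of the single remark that one repeats the argument of Theorem~\ref{thm12} verbatim, replacing symmetric-decreasing rearrangement by Steiner symmetrization and invoking \cite[Theorem~1.1]{C17} (you write Theorem~1.11, presumably a typo) in place of \eqref{FEi}. Your write-up is in fact more explicit than the paper's, since you spell out the decomposition $\nu_1(D)=\dfrac{\int_a^b X_1\,D^{\alpha}_{a+}\mathcal{D}^{\alpha}_{b-}X_1\,dx}{\int_a^b X_1^2\,dx}+\lambda_1(\Omega)$ and the final variational step, but the underlying strategy is identical.
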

\begin{proof}
Since the Steiner symmetrization has the same property as the symmetric-decreasing rearrangement, proof of this theorem is similar to Theorem \ref{thm12}, but instead of the symmetric-decreasing rearrangement we use the Steiner symmetrization.
\end{proof}
\section{Lyapunov and Hartmann-Wintner inequalities}
\subsection{Lyapunov inequality}
\label{SEC:2}
Let us consider the fractional elliptic equation:
 \begin{equation}\label{osn}
\mathcal{L}^{\alpha,s}u(x,y)=q(x)u(x,y),\,\,\textrm{in}\,\,D= (a,b)\times\Omega,
\end{equation}
with boundary conditions \eqref{DirCon1}-\eqref{DirCon2}, where $q(x)$ be a real-valued, continuous function.

In this section we show a Lyapunov-type inequality for \eqref{osn}.
\begin{theorem}\label{thm2}
Assume that $\frac{1}{2}<\alpha\leq 1$ , $s\in(0,1)$ and $q\in C([a,b])$. Then for \eqref{osn}, we get

\begin{equation}
\int_{a}^{b}|q(x)-\lambda_{1}(\Omega)|dx\geq\left(\sup\limits_{a<x<b}G(x,x)\right)^{-1},
\end{equation}
where $\lambda_{1}(\Omega)$ is the first eigenvalue of \eqref{spec}.
\end{theorem}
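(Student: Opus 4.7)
The plan is to reduce the multidimensional boundary value problem to the one-dimensional fractional equation already analysed in Theorem \ref{thm1}, and then exploit Lemma \ref{lem1} to bound the resulting Green's-function representation. First I would project $u$ onto the positive first eigenfunction $\varphi_{1}(y)$ of the fractional Dirichlet-Laplacian on $\Omega$ by setting
\begin{equation*}
U(x) := \int_{\Omega} u(x,y)\, \varphi_{1}(y)\, dy, \qquad x\in[a,b].
\end{equation*}
Multiplying $\mathcal{L}^{\alpha,s}u(x,y) = q(x)u(x,y)$ by $\varphi_{1}(y)$, integrating over $\Omega$, and using the self-adjointness of $(-\Delta)^{s}_{y}$ together with \eqref{spec}, I obtain the one-dimensional equation
\begin{equation*}
D^{\alpha}_{a+}\mathcal{D}^{\alpha}_{b-} U(x) = \bigl(q(x) - \lambda_{1}(\Omega)\bigr) U(x), \qquad U(a)=U(b)=0,
\end{equation*}
where the Dirichlet conditions on $U$ come directly from \eqref{DirCon1}.

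Next I would apply Theorem \ref{thm1} to this 1D problem, with the potential $q(x) - \lambda_{1}(\Omega)$ in place of $q(x)$, to get the integral representation
\begin{equation*}
U(x) = \int_{a}^{b} G(x,t) \bigl(q(t) - \lambda_{1}(\Omega)\bigr) U(t)\, dt.
\end{equation*}
Choosing $x_{0}\in(a,b)$ where $|U|$ attains its maximum $\|U\|_{\infty}>0$, taking absolute values, and using $|U(t)|\le\|U\|_{\infty}$ yields
\begin{equation*}
\|U\|_{\infty} \le \|U\|_{\infty} \int_{a}^{b} G(x_{0},t)\, |q(t) - \lambda_{1}(\Omega)|\, dt.
\end{equation*}

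The final ingredient is the pointwise bound $G(x,t) \le \sup_{a<z<b} G(z,z)$. For $t\le x$ this is exactly Lemma \ref{lem1}; for $t>x$ it follows from the symmetry $G(x,t)=G(t,x)$ (immediate from the symmetry of the kernel $K$) combined with Lemma \ref{lem1} with the roles of $x$ and $t$ reversed. Substituting this bound into the previous display and dividing by $\|U\|_{\infty}$ gives
\begin{equation*}
1 \le \sup_{a<z<b} G(z,z)\, \int_{a}^{b} |q(t)-\lambda_{1}(\Omega)|\, dt,
\end{equation*}
which rearranges to the claimed Lyapunov inequality. The main subtlety I expect is justifying that the projection $U$ is not identically zero: this is transparent when $u(x,y)=X(x)\varphi_{1}(y)$ is the separated first mode appearing in the eigenvalue analysis of Section 3, but in general it requires invoking the positivity of $\varphi_{1}$ together with a suitable sign condition on $u$ (the usual setting for a Lyapunov-type necessary condition on the existence of a nontrivial solution).
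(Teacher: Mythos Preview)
Your proposal is correct and follows essentially the same route as the paper: project onto $\varphi_{1}$ to reduce to the one-dimensional problem \eqref{mainproblem} with potential $q-\lambda_{1}(\Omega)$, then combine the Green's-function representation of Theorem \ref{thm1} with the diagonal bound from Lemma \ref{lem1}. Your explicit use of the symmetry $G(x,t)=G(t,x)$ to handle the range $t>x$, and your remark on the non-triviality of the projection $U$, are points the paper leaves implicit.
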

\begin{proof}
By multiplying \eqref{osn} with $\varphi_{1}(y)$ and integrating over $\Omega$, we obtain
\begin{align*}
&\int_{\Omega}D_{a+,x}^{\alpha} \mathcal{D}_{b-,x}^{\alpha}u(x,y)\varphi_{1}(y)dy+\int_{\Omega}((-\Delta_{y})^{s}u(x,y))\varphi_{1}(y)dy -q(x)\int_{\Omega}u(x,y)\varphi_{1}(y)dy\\
=&D_{a+,x}^{\alpha} \mathcal{D}_{b-,x}^{\alpha}\int_{\Omega}u(x,y)\varphi_{1}(y)dy +\int_{\Omega}((-\Delta_{y})^{s}u(x,y))\varphi_{1}(y)dy-q(x)\int_{\Omega}u(x,y)\varphi_{1}(y)dy\\
=&D_{a+,x}^{\alpha} \mathcal{D}_{b-,x}^{\alpha}\int_{\Omega}u(x,y)\varphi_{1}(y)dy +\int_{\Omega}((-\Delta_{y})^{s}\varphi_{1}(y))u(x,y)dy -q(x)\int_{\Omega}u(x,y)\varphi_{1}(y)dy\\
=&D_{a+,x}^{\alpha} \mathcal{D}_{b-,x}^{\alpha}\int_{\Omega}u(x,y)\varphi_{1}(y)dy +\lambda_{1}(\Omega)\int_{\Omega}u(x,y)\varphi_{1}(y)dy-q(x)\int_{\Omega}u(x,y)\varphi_{1}(y)dy
\\
=&D_{a+,x}^{\alpha} \mathcal{D}_{b-,x}^{\alpha}v(x)-q_{1}(x)v(x)=0,
\end{align*}
where $v(x)=\int_{\Omega}u(x,y)\varphi_{1}(y)dy,$ $q_{1}(x)=q(x)-\lambda_{1}(\Omega),$ from boundary conditions \eqref{DirCon1}, \eqref{DirCon2}, we get
$$v(a)=0,\, v(b)=0.$$ That is
$$D_{a+,x}^{\alpha} \mathcal{D}_{b-,x}^{\alpha}v(x)-q_{1}(x)v(x)=0,\,x\in(a,b),\,\,\,v(a)=0,\,\, v(b)=0.$$

By using Theorem \ref{thm1} and Lemma \ref{lem1}, we establish
\begin{equation}
    |u(x)|\leq \int_{a}^{b}|G(x,t)||q_1(t)|u(t)|dt\stackrel{\eqref{eq1}}\leq  G(x,x)\left(\sup\limits_{a<t<b}|u(t)|\right)\int_{a}^{b}|q_1(t)|dt.
\end{equation}
Then by taking supremum in both sides in $a<x<b$, we have
\begin{equation}\label{Sup}
   \left(\sup\limits_{a<x<b}G(x,x)\right)^{-1}\leq\int_{a}^{b}|q_1(t)|dt.
\end{equation}
Finally, by using \eqref{Sup}, we have
\begin{equation}\begin{split}
\int_{a}^{b}|q_{1}(x)|dx&=\int_{a}^{b}|q(x)-\lambda_{1}(\Omega)|dx \geq\left(\sup\limits_{a<x<b}G(x,x)\right)^{-1}.
\end{split}\end{equation}
The proof of Theorem \ref{thm2} is complete.
\end{proof}
\begin{cor}
By taking $\alpha=1$, we get
$$\int_{a}^{b}|q_{1}(x)|dx=\int_{a}^{b}|q(x)-\lambda_{1}(\Omega)|dx \geq\frac{4}{b-a}.$$
\end{cor}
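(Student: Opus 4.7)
The plan is to specialise the conclusion of Theorem \ref{thm2} by computing the kernel $G(x,t)$ explicitly when $\alpha=1$ and then maximising $G(x,x)$ over $(a,b)$. The only real work is a direct calculation with elementary functions.

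First I would plug $\alpha=1$ into the definition
$$K(x,t)=\frac{1}{\Gamma^{2}(\alpha)}\int_{\max\{x,t\}}^{b}(s-x)^{\alpha-1}(s-t)^{\alpha-1}\,ds,$$
which collapses (since $\Gamma(1)=1$ and the integrand becomes $1$) to $K(x,t)=b-\max\{x,t\}$. In particular $K(a,a)=b-a$, $K(x,a)=b-x$, $K(a,t)=b-t$. Substituting these into the formula for $G(x,t)$ from Theorem \ref{thm1} gives, for $a<t\le x<b$,
$$G(x,t)=(b-x)-\frac{(b-t)(b-x)}{b-a}=\frac{(b-x)(t-a)}{b-a},$$
and symmetrically $G(x,t)=(b-t)(x-a)/(b-a)$ for $x\le t$. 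This formula is exactly the Green's function of $-\partial_x^{2}$ on $(a,b)$ with Dirichlet data, which is the expected sanity check at $\alpha=1$.

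Next I would set $t=x$ to obtain $G(x,x)=(b-x)(x-a)/(b-a)$, and maximise over $x\in(a,b)$. The quadratic is maximised at $x=(a+b)/2$, with maximum value
$$\sup_{a<x<b}G(x,x)=\frac{b-a}{4}.$$
Plugging this into the inequality proved in Theorem \ref{thm2} yields
$$\int_{a}^{b}|q(x)-\lambda_{1}(\Omega)|\,dx\ \ge\ \Bigl(\sup_{a<x<b}G(x,x)\Bigr)^{-1}=\frac{4}{b-a},$$
which is the desired conclusion.

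There is no serious obstacle here: the step to watch is simply making sure that the limit $\alpha\to 1$ in $K$ is taken correctly (the integrand degenerates to the constant $1$, and $\Gamma^{2}(\alpha)\to 1$), and that the split $t\le x$ vs.\ $t\ge x$ is handled consistently so that $G$ is continuous across the diagonal. After that, the whole corollary is a one-line optimisation of a concave quadratic, reproducing the classical Lyapunov constant $4/(b-a)$ in the local case $\alpha=1$.
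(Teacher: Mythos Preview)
Your proof is correct and follows essentially the same approach as the paper: both compute $K$ explicitly at $\alpha=1$, form $G(x,x)=(b-x)(x-a)/(b-a)$ (the paper writes it as $b-x-(b-x)^2/(b-a)$, which is the same thing), maximise it to $\tfrac{b-a}{4}$, and invoke Theorem~\ref{thm2}. Your extra observation that $G(x,t)$ is the classical Dirichlet Green's function of $-\partial_x^{2}$ is a nice sanity check the paper does not spell out.
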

\begin{proof}
Firstly, let us calculate some integrals. Then,
\begin{equation}
    K(x,x)=K(x,a)=b-x,
\end{equation}
and
\begin{equation}
    K(a,a)=b-a.
\end{equation}
Then by using these facts, we have
$$G(x,x)=b-x-\frac{(b-x)^{2}}{b-a}.$$
Then supremum of the function $G(x,x)$ on $a<x<b$ equals to $\frac{b-a}{4}$.
\end{proof}

\begin{theorem}
Suppose that $\frac{1}{2}<\alpha<1$ and $s\in(0,1)$. Then we have,
\begin{equation}\begin{split}
\int\limits^b_a|q(x)|dx+(b-a)\lambda_{1}(\Omega)&\geq \int\limits^b_a|q(x)|dx+(b-a)\lambda_{1}(B)\\& \geq\left(\sup\limits_{a<x<b}G(x,x)\right)^{-1},
\end{split}\end{equation}
where $\lambda_{1}(B)$ is the first eigenvalue of the eigenvalue problem \eqref{spec2} in a ball $B$ with $|\Omega|=|B|$.
\end{theorem}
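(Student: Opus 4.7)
The plan is to combine the Rayleigh-Faber-Krahn inequality \eqref{FEi} for the fractional Dirichlet Laplacian with the Lyapunov-type bound already established in Theorem \ref{thm2}, and then invoke an elementary triangle inequality to strip the $\lambda_1$ out of the integrand.

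For the first (outer) inequality I would argue directly: the hypothesis $|\Omega|=|B|$ together with \eqref{FEi} gives $\lambda_1(\Omega)\geq \lambda_1(B)$; multiplying by $(b-a)>0$ and adding $\int_a^b|q(x)|\,dx$ to both sides yields
$$\int_a^b |q(x)|\,dx+(b-a)\lambda_1(\Omega)\ \geq\ \int_a^b |q(x)|\,dx+(b-a)\lambda_1(B).$$
No further machinery is required for this step.

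For the second (and main) inequality I would start from the conclusion of Theorem \ref{thm2}, namely
$$\int_a^b |q(x)-\lambda_1(\Omega)|\,dx\ \geq\ \Bigl(\sup_{a<x<b} G(x,x)\Bigr)^{-1},$$
and apply the pointwise bound $|q(x)-\lambda_1(\Omega)|\leq |q(x)|+\lambda_1(\Omega)$, which is valid because $\lambda_1(\Omega)>0$ by Lemma \ref{lm1} (and the positivity of the fractional Dirichlet Laplacian). Integrating over $(a,b)$ gives
$$\int_a^b |q(x)|\,dx+(b-a)\lambda_1(\Omega)\ \geq\ \Bigl(\sup_{a<x<b} G(x,x)\Bigr)^{-1}.$$

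The main obstacle I anticipate is bridging the final gap: the triangle inequality I just used introduces $\lambda_1(\Omega)$, not $\lambda_1(B)$. Writing $A=\int|q|+(b-a)\lambda_1(\Omega)$, $B=\int|q|+(b-a)\lambda_1(B)$, $C=(\sup G)^{-1}$, what the two steps above combine to deliver is $A\geq B$ and $A\geq C$, while the displayed chain also asserts the strictly stronger relation $B\geq C$ (since Faber-Krahn already forces $B\leq A$). To close this gap I would attempt to refine the proof of Theorem \ref{thm2} so that $\lambda_1(\Omega)$ is replaced by $\lambda_1(B)$ at the stage where we multiply by $\varphi_1(y)$ and integrate — for instance by testing against a symmetric-decreasing rearrangement of $\varphi_1$ and exploiting the Riesz-type rearrangement inequality used in Theorem \ref{thm12} to compare the nonlocal energy; failing that, the stated chain should be read as the two separate inequalities $A\geq B$ (Faber-Krahn) and $A\geq C$ (Theorem \ref{thm2}+triangle inequality), which is precisely what the natural composition of ingredients available in the paper produces.
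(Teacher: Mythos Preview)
Your argument coincides with the paper's in all essential respects: Faber--Krahn for the first inequality, and Theorem~\ref{thm2} combined with the triangle inequality $|q(x)-\lambda|\leq |q(x)|+\lambda$ for the second. The only difference is the order in which the paper combines the two ingredients: instead of using the triangle inequality with $\lambda_1(\Omega)$ as you do, the paper applies it with $\lambda_1(B)$, writing
\[
\int_a^b |q(x)|\,dx+(b-a)\lambda_1(B)\ \geq\ \int_a^b |q(x)-\lambda_1(B)|\,dx\ \geq\ \Bigl(\sup_{a<x<b} G(x,x)\Bigr)^{-1},
\]
and then cites ``the previous Theorem'' together with Theorem~A.1 of \cite{FS} for the last step.

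The gap you flag is real, and the paper's proof does not close it either. Theorem~\ref{thm2} yields $\int_a^b|q(x)-\lambda_1(\Omega)|\,dx\geq(\sup G)^{-1}$, with $\lambda_1(\Omega)$ coming from testing against the first Dirichlet eigenfunction $\varphi_1$ on the actual domain $\Omega$ where the problem \eqref{osn} is posed; replacing $\lambda_1(\Omega)$ by the smaller number $\lambda_1(B)$ is not automatic, since $|q-\lambda_1(B)|$ and $|q-\lambda_1(\Omega)|$ are not pointwise comparable. The paper's invocation of \cite{FS} at this point is the Polya--Szeg\H{o}/rearrangement step underlying \eqref{FEi}, but that only delivers $\lambda_1(\Omega)\geq\lambda_1(B)$, which is the \emph{first} inequality and does not by itself justify the Lyapunov bound with $\lambda_1(B)$. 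So your reading---that the natural combination of the available ingredients produces exactly $A\geq B$ and $A\geq C$, while $B\geq C$ is asserted but not proved---matches what the paper actually establishes.
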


\begin{proof}
By the previous Theorem, let $B$ be a ball, then by using Theorem A.1 in \cite{FS}, we have
\begin{equation}\begin{split}
\int\limits^b_a|q(x)|dx+(b-a)\lambda_{1}(\Omega)&\geq
\int\limits^b_a|q(x)|dx+(b-a)\lambda_{1}(B)\\& \geq
\int_{a}^{b}|q(x)-\lambda_{1}(B)|dx \\&
\geq \left(\sup\limits_{a<x<b}G(x,x)\right)^{-1},
\end{split}\end{equation} completing the proof.
\end{proof}

\subsection{Hartman-Wintner inequality} In this section, we show a Hartman-Wintner type inequality for problem \eqref{osn}, \eqref{DirCon1}, \eqref{DirCon2}.
\begin{theorem}
Let $\frac{1}{2}<\alpha\leq1$ and $s\in(0,1)$,  and $q\in C([a,b])$. Suppose that the fractional boundary value problem \eqref{osn}, \eqref{DirCon1}, \eqref{DirCon2} has a nontrivial continuous solution. Then, we have
\begin{equation}\label{hw}
\int\limits_a^b \left(K(a,a)K(s,s)-K^{2}(a,s)\right) [q(x)-\lambda_{1}(\Omega)]^+ ds\geq \frac{(b-a)^{2\alpha-1}}{\Gamma^{2}(\alpha)(2\alpha-1)},
\end{equation}
where $[q(x)-\lambda_{1}(\Omega)]^+ = \max\{q(x)-\lambda_{1}(\Omega), 0\}.$
\end{theorem}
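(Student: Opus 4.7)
The plan is to mirror the reduction used in the proof of Theorem~\ref{thm2} and then upgrade the crude $L^\infty$ Green-kernel bound $G(x,t)\le\sup_x G(x,x)$ to a sharper Cauchy--Schwarz bound that matches the Gram-determinant structure visible on the left-hand side of \eqref{hw}.

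First I would test the fractional PDE against the positive first Dirichlet eigenfunction $\varphi_1$ of $(-\Delta)^s_y$ on $\Omega$ and integrate over $\Omega$. Using self-adjointness of $(-\Delta)^s_y$ this reduces the problem to the one-dimensional fractional ODE
\[
D^{\alpha}_{a+,x}\mathcal D^{\alpha}_{b-,x}v(x)=q_1(x)v(x),\qquad v(a)=v(b)=0,
\]
where $v(x):=\int_\Omega u(x,y)\varphi_1(y)\,dy$ and $q_1(x):=q(x)-\lambda_1(\Omega)$. Nontriviality of $u$ together with positivity of $\varphi_1$ secures $v\not\equiv 0$, and Theorem~\ref{thm1} then recasts the reduced equation as $v(x)=\int_a^b G(x,t)\,q_1(t)\,v(t)\,dt$.

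Next I would rewrite \eqref{hw} in a cleaner form. A short algebraic calculation gives the key identities
\[
K(a,a)K(s,s)-K^2(a,s)=K(a,a)\,G(s,s),\qquad K(a,a)=\frac{(b-a)^{2\alpha-1}}{\Gamma^2(\alpha)(2\alpha-1)},
\]
so \eqref{hw} is equivalent to $\int_a^b G(s,s)[q_1(s)]^+\,ds>1$. The analytic input is a Gram inequality for $G$: the functions $f_x(r):=(r-x)^{\alpha-1}\chi_{(x,b)}(r)$ lie in $L^2(a,b)$ with $K(x,t)=\Gamma(\alpha)^{-2}\langle f_x,f_t\rangle$, and after Gram--Schmidt against $f_a$, i.e.\ $g_x:=f_x-\frac{\langle f_x,f_a\rangle}{\langle f_a,f_a\rangle}f_a$, one computes $G(x,t)=\Gamma(\alpha)^{-2}\langle g_x,g_t\rangle$; Cauchy--Schwarz then produces
\[
|G(x,t)|\le\sqrt{G(x,x)\,G(t,t)},
\]
with equality only when $g_x\parallel g_t$.

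Finally I would feed this back into the integral equation. When $v\ge 0$, dividing by $\sqrt{G(x,x)}$ and taking the supremum of $v(x)/\sqrt{G(x,x)}$ yields $1<\int_a^b G(t,t)[q_1(t)]^+\,dt$, with strictness inherited from the strict Cauchy--Schwarz (the vectors $g_x,g_t$ are not proportional on a set of positive measure), and multiplying by $K(a,a)$ recovers \eqref{hw}. The main obstacle is the sign issue in this last step: for a generic nontrivial solution $v$ need not be nonnegative, and the naive bound $|q_1 v|\le|q_1||v|$ only recovers the Lyapunov-type inequality of Theorem~\ref{thm2} with $|q_1|$ in place of $[q_1]^+$. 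I would address this either by restricting attention to the principal positive solution of the reduced ODE, or, in the general case, by working on a nodal subinterval of $v$ on which $v$ has a definite sign and noting that the diagonal kernel $G(s,s)$ restricted to such a subinterval is dominated pointwise by its counterpart on $(a,b)$; this is the analogue of the classical Hartman--Wintner nodal argument and is the delicate step in the fractional setting, since $D^{\alpha}_{a+}\mathcal D^{\alpha}_{b-}$ is nonlocal and the Green's function on a subinterval is not obtained by simple restriction.
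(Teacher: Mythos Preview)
Your reduction to the one–dimensional problem via $\varphi_1$ and the rewriting $K(a,a)K(s,s)-K^2(a,s)=K(a,a)G(s,s)$, $K(a,a)=\frac{(b-a)^{2\alpha-1}}{\Gamma^2(\alpha)(2\alpha-1)}$ are exactly what the paper does, so the target inequality is indeed equivalent to $\int_a^b G(s,s)[q_1(s)]^+\,ds\ge 1$. The difference is in how you bound the kernel: you go through the Gram/Cauchy--Schwarz inequality $|G(x,t)|\le\sqrt{G(x,x)G(t,t)}$ and then take $\sup_x |v(x)|/\sqrt{G(x,x)}$, whereas the paper simply invokes Lemma~\ref{lem1} (the monotonicity of $G(x,\cdot)$) to get the cruder pointwise bound $|G(x,s)|\le G(s,s)$ and then takes $\sup_x|v(x)|$. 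Both routes land on the same inequality $\int_a^b G(s,s)|q_1(s)|\,ds\ge 1$, so your Gram machinery is correct but unnecessary here; the paper's argument is shorter. On the other hand, your caution about the sign of $q_1$ is well placed: the paper's own chain of inequalities carries $|q_1(s)|$, not $[q_1(s)]^+$, all the way to the last line, so as written it only proves the weaker estimate with $|q_1|$. The nodal/positive--solution argument you sketch is the standard way to upgrade to $[q_1]^+$ in the classical case; the paper does not supply this step.
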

\begin{proof}
By multiplying \eqref{osn} with $\varphi_{1}(y)$ and integrating over $\Omega,$ for the function $v(x)=\int_{\Omega}u(x,y)\varphi_{1}(y)dy$ we have the following problem:
\begin{equation}\label{lyap1}
 \begin{cases}
D_{a+,x}^{\alpha} \mathcal{D}_{b-,x}^{\alpha}v(x)-q_{1}(x)v(x)=0,\,\,\,\,x\in(a,b),\\
   v(a)=0,\, v(b)=0.
 \end{cases}
\end{equation}
By Theorem \ref{thm1} the problem \eqref{lyap1} is equivalent to the integral equation  $$v(x)=\int\limits_a^b G(x,s) q_1(s)v(s)ds,$$ where $$G(x,t)=\frac{K(x,t)}{\Gamma^{2}(\alpha)}-\frac{K(a,t)K(x,a)}{\Gamma^{2}(\alpha)K(a,a)},$$

$$K(x,t)=\frac{1}{\Gamma^{2}(\alpha)}\int_{\max\{x,t\}}^{b}(s-x)^{\alpha-1}(s-t)^{\alpha-1}ds,$$
and from Lemma \ref{lem1}, we have
\begin{equation}\label{green1}
 G(x,s)\leq G(s,s),\,\, \textrm{for} \,\, a<x<s<b.
\end{equation}
From this, by \eqref{green1} for any $a \leq x \leq b,$ we obtain
\begin{align*}|v(x)|&\leq\int\limits_a^b |G(x,s)| |q_1(s)||v(s)|ds\\& \leq \int\limits_a^b G(s,s) |q_1(s)||v(s)|ds\\&
=\frac{1}{K(a,a)}\int\limits_a^b \left(K(a,a)K(s,s)-K^{2}(a,s)\right) |q_1(s)||v(s)|ds\\&
=\frac{\Gamma^{2}(\alpha)(2\alpha-1)}{(b-a)^{2\alpha-1}}\int\limits_a^b\left(K(a,a)K(s,s)-K^{2}(a,s)\right) |q_1(s)||v(s)|ds,\end{align*} thanks to $K(a,a)=\frac{(b-a)^{2\alpha-1}}{\Gamma^2(\alpha)(2\alpha-1)}.$
Theorem \ref{hw} is proved.
\end{proof}
\begin{cor}
By taking $\alpha=1$ and $s=1$ in \eqref{hw}, we get the classical Hartman-Wintner inequality
\begin{equation}
\int_{a}^{b}(b-s)(s-a)q_1^{+}(s)\geq b-a.
\end{equation}
\end{cor}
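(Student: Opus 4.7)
The plan is to specialize the fractional Hartman--Wintner inequality \eqref{hw} by setting $\alpha=1$, which collapses all the gamma factors and power-weight integrals in $K(x,t)$ to elementary expressions, and then to verify that both sides of \eqref{hw} reduce to the two sides of the classical inequality. The order $s=1$ of the Laplacian only affects the definition of $\lambda_{1}(\Omega)$ (namely, $(-\Delta)^{s}$ becomes the ordinary Dirichlet Laplacian), and it does not enter the algebraic manipulation since $\lambda_{1}(\Omega)$ is absorbed into $q_{1}=q-\lambda_{1}(\Omega)$ on the left-hand side.

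First I would compute the kernel $K$ at $\alpha=1$. Since $(s-x)^{\alpha-1}=(s-t)^{\alpha-1}=1$ in this case, the defining integral trivializes to
\begin{equation*}
K(x,t)=\int_{\max\{x,t\}}^{b}ds = b-\max\{x,t\},
\end{equation*}
and in particular $K(a,a)=b-a$, $K(s,s)=b-s$, and $K(a,s)=b-s$ for $a<s<b$. The factor appearing in \eqref{hw} then simplifies as
\begin{equation*}
K(a,a)K(s,s)-K^{2}(a,s)=(b-a)(b-s)-(b-s)^{2}=(b-s)(s-a),
\end{equation*}
which is precisely the classical Hartman--Wintner weight.

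Next I would evaluate the right-hand side of \eqref{hw} at $\alpha=1$: using $\Gamma(1)=1$ and $2\alpha-1=1$,
\begin{equation*}
\frac{(b-a)^{2\alpha-1}}{\Gamma^{2}(\alpha)(2\alpha-1)}\;=\;b-a.
\end{equation*}
Substituting both computations into \eqref{hw} yields the stated bound $\int_{a}^{b}(b-s)(s-a)\,q_{1}^{+}(s)\,ds\geq b-a$, with the convention that the integration variable on the left-hand side of \eqref{hw} should be $s$ (not $x$) so that it is consistent with the $ds$ in the measure and with $q_{1}=q-\lambda_{1}(\Omega)$.

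There is essentially no obstacle here beyond bookkeeping: the only subtlety is making sure the reduction $(-\Delta)^{s}\mapsto -\Delta$ at $s=1$ is understood as merely replacing $\lambda_{1}(\Omega)$ in \eqref{hw} by the first Dirichlet eigenvalue of $-\Delta$ on $\Omega$, while the algebraic form of the Hartman--Wintner inequality is driven entirely by the degeneration $\alpha\to 1$ of the kernel $K$.
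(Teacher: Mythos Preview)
Your proof is correct. The paper states this corollary without proof, treating it as an immediate specialization of the fractional Hartman--Wintner inequality; your direct substitution $\alpha=1$ into the kernel $K$ and the right-hand side is exactly the natural computation, and it parallels the explicit argument the paper gives for the analogous $\alpha=1$ corollary to the Lyapunov inequality.
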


\end{document}